\title{Neural Networks and (Virtual) Extended Formulations}
\author{Christoph Hertrich}
\address{Christoph Hertrich\\University of Technology Nuremberg}
\email{christoph.hertrich@utn.de}
\author{Georg Loho}
\address{Georg Loho\\Freie Universität Berlin \& University of Twente}
\email{georg.loho@math.fu-berlin.de}
\newtheorem{theorem}{Theorem}[section]
\newtheorem{proposition}[theorem]{Proposition}
\newtheorem{lemma}[theorem]{Lemma}
\newtheorem{question}[theorem]{Question}
\theoremstyle{definition}
\theoremstyle{remark}
\newtheorem{remark}[theorem]{Remark}
\newenvironment{proofwithcaption}[1]{%
	\begin{proof}%
	}{%
	\end{proof}%
}
\newcommand{\R}{\mathbb{R}}
\newcommand{\N}{\mathbb{N}}
\newcommand{\dinv}{\delta^\mathrm{in}_v}
\DeclareMathOperator{\conv}{conv}
\DeclareMathOperator{\xc}{xc}
\DeclareMathOperator{\vxc}{vxc}
\DeclareMathOperator{\nnc}{nnc}
\DeclareMathOperator{\mnnc}{mnnc}
\DeclareMathOperator{\epi}{epi}
\begin{document}

\begin{abstract}	
	Neural networks with piecewise linear activation functions, such as rectified linear units (ReLU) or maxout, are among the most fundamental models in modern machine learning.
	We make a step towards proving lower bounds on the size of such neural networks by linking their representative capabilities to the notion of the extension complexity $\xc(P)$ of a polytope $P$. This is a well-studied quantity in combinatorial optimization and polyhedral geometry describing the number of inequalities needed to model $P$ as a linear program. We show that $\xc(P)$ is a lower bound on the size of any \emph{monotone} or \emph{input-convex} neural network that solves the linear optimization problem over $P$. This implies exponential lower bounds on such neural networks for a variety of problems, including the polynomially solvable maximum weight matching problem.
	
	In an attempt to prove similar bounds also for general neural networks, we introduce the notion of \emph{virtual extension complexity} $\vxc(P)$, which generalizes $\xc(P)$ and describes the number of inequalities needed to represent the linear optimization problem over $P$ as a difference of two linear programs.
	We prove that $\vxc(P)$ is a lower bound on the size of any neural network that optimizes over $P$.
	While it remains an open question to derive useful lower bounds on $\vxc(P)$, we argue that this quantity deserves to be studied independently from neural networks by proving that one can efficiently optimize over a polytope $P$ given a virtual extended formulation with small encoding size.
\end{abstract}

\maketitle
\section{Introduction}

A \emph{feedforward neural network} is a directed, acyclic graph in which each vertex (\emph{neuron}) defines a simple computation, usually a linear transformation composed with a scalar-valued, \emph{continuous and piecewise linear} (CPWL) activation function.
While a standard choice for the activation function is the \emph{rectified linear unit} (ReLU) $x\mapsto\max\{0,x\}$, in this paper we focus on the more general \emph{maxout} networks.
These allow to compute the maximum of constantly many linear functions at each neuron.
As a result, the entire network computes a (potentially complex) CPWL function.
One of the big challenges in the theoretical analysis of neural networks is to understand how many neurons one requires to exactly or approximately represent a given (CPWL) function.
To the best of our knowledge, it is an open question whether there exists a family of CPWL functions, which we can evaluate in polynomial time, but which cannot be represented by polynomial-size neural networks.\footnote{Note that here we refer to exact real-valued and not binary computation, compare the related discussion in \Cref{sec:relatedwork}.}

The piecewise linear nature of the studied networks suggests to tackle such questions by means of polyhedral geometry, see, e.g., the recent survey by \citet{huchette2023deep}.
In fact, a similar problem to the question above used to be open for a long time in the context of linear programming, until \citet{rothvoss2017matching} resolved it affirmatively: does there exist a polytope $P$ over which we can optimize in polynomial time, but any linear programming formulation must have exponential size?
This question can be formalized with the notion of \emph{extension complexity} $\xc(P)$, which describes the minimal number of facets of any polytope $Q$ that projects onto $P$.
In this case, we call $Q$ an \emph{extended formulation} of $P$. \citet{rothvoss2017matching} proved that the matching polytope has exponential extension complexity even though the algorithm by \citet{edmonds1965maximum} can be used to find the maximum weight matching or minimum weight perfect matching in polynomial time.
Additionally, \citet{fiorini2015exponential} proved that a couple of polytopes associated with NP-hard optimization problems like the traveling salesperson problem have exponential extension complexity. \citet{rothvoss2017matching} and \citet*{fiorini2015exponential} received the Gödel Prize 2023 for their breakthrough results.

There is a direct translation between polytopes (as feasible sets of linear programs) and CPWL functions (represented by neural networks) through the notion of the \emph{support function} $f_P(c)=\max_{x\in P} c^\top x$ of a polytope $P$, see \Cref{fig:correspondence-polytope-support-function}.
This (convex) CPWL function, which has one linear region for each vertex of $P$, uniquely determines $P$ via convex duality.
Computing $f_P(c)$ means determining the objective value when optimizing over $P$ in $c$-direction.
Each CPWL function $f_P$ can be represented by a neural network \citep{arora2018understanding}, though the required number of neurons can be large. 
To quantify this, we define the \emph{neural network complexity} $\nnc(P)$ as the minimum number of neurons to represent $f_P$ by a maxout neural network.

\begin{figure}[ht]
  \centering
\begin{tikzpicture}[scale=0.65]

          \footnotesize\textcolor{blue!70!black}{
            \node[draw, color=blue, circle, fill=blue, inner sep=1, label=270:{(0,0)}] (null) at (-1,-1) {};
            \node[draw, color=blue, circle, fill=blue, inner sep=1, label=90:{(0,1)}] (zwei) at (-1,1) {};
            \node[draw, color=blue, circle, fill=blue, inner sep=1, label=270:{(1,0)}] (eins) at (1,-1) {};
            \draw[thick, color=blue] (null) -- (eins) -- (zwei) -- (null);
        }

          \begin{scope}[shift={(8.7,0)}]
        \draw[->, bend right=20] (-2.7,.7) to node[above] {\footnotesize Newton Polytope} (-6.5,.7);
        \draw[->, bend right=20] (-6.5,-.7) to node[below] {\footnotesize Support Function} (-2.7,-.7);
        \node at (-4.6,0) {\footnotesize \textbf{Duality}};
        \end{scope}
        
  \begin{scope}[shift={(8.4,.4)}]
            \textcolor{blue!70!black}{
                \draw[] (0,0) -- (1,1);
                \draw[] (0,-1.2) -- (0,0);
                \draw[] (-1.2,0) -- (0,0);
                \footnotesize
                \node at (0.5,-0.2) {$x_1$};
                \node at (-0.5,-0.5) {$0$};
                \node[align=right] at (-0.2,0.5) {$x_2$};
                \node at (0,-1.8) {$\max\{0,x_1,x_2\}$};}
        \end{scope}
        
    \end{tikzpicture} 
  \caption{Sketch of correspondence between polytopes and support functions. }
  \label{fig:correspondence-polytope-support-function}
\end{figure}

\subsection{Our Contributions}

The aim of this paper is to connect the world of extended formulations with the study of neural networks.
A ``dream result'' in this direction would be to bound $\xc(P)$ polynomially in $\nnc(P)$.
Then the breakthrough results on extension complexity would directly imply strong lower bounds on the size of neural networks.
It turns out, however, that there is one feature of neural networks that seems to make the ``dream result'' difficult to obtain or maybe even wrong: namely the ability to use subtraction. There are two natural ways to circumvent this difficulty: either prove the ``dream result'' for a weaker version of neural networks, or prove it for a stronger version of extension complexity. We accomplish both of these variants in our paper.

\subsubsection*{Monotone and Input-Convex Neural Networks}
A consequence of the discussion above is that, if we remove the ability to subtract within neural networks, we do indeed obtain our ``dream result'', namely lower bounds through $\xc(P)$. This leads to two different, but closely related neural network models: \emph{monotone} \citep{mikulincer2022size} and \emph{input-convex} neural networks (ICNNs; \citet{amos2017input}). The former only allows nonnegative weights, therefore enforcing each neuron and the entire network to represent a monotone and convex function. The latter allows negative weights only on outgoing connections from the input neurons, therefore also enforcing the represented function to be convex, but no longer monotone.

Obviously, every network that is monotone in the sense defined above is also input-convex. In \Cref{sec:monotone}, we show that for computing monotone functions with maxout activations, ICNNs are not more efficient than monotone networks. So, in this case, both models are equivalent. In analogy to $\nnc(P)$, we define the \emph{monotone neural network complexity} $\mnnc(P)$ as the minimum number of neurons to represent $f_P$ by a monotone network or an ICNN.\footnote{Even though ICNNs are used more frequently in practice, we still prefer to name the complexity measure after monotone networks due to the significance of monotone models in the circuit complexity community.} We then show that exponential lower bounds on $\xc(P)$ imply corresponding lower bounds on $\mnnc(P)$ and consequently on exact and approximate representations with monotone networks and ICNNs.

Studying these restricted types of neural networks is justified both from a theoretical and a practical perspective.
From the theoretical perspective, it is a natural approach in complexity theory to prove lower bounds first for monotone models of computation, to circumvent some additional challenges of the general case; see, e.g., \citet{valiant1979negation}.
From the practical perspective, it is sometimes desirable or even necessary to incorporate prior knowledge about monotonicity and/or convexity of the target function into a machine learning model. For convexity, ICNNs have been used extensively for exactly this purpose after they were introduced by \citet{amos2017input}; see, e.g., the literature overview in \citet{gagneux2025convexity}. For monotonicity, we refer to \citet{mikulincer2022size} and the references therein for recent studies of monotone neural networks in the machine learning community.

Depending on whether similar lower bounds can also be established for general neural networks or not, our exponential lower bounds on monotone networks and ICNNs could indicate that these models might not be the best solution for enforcing convexity of a machine learning model. This aligns with some observations by \citet{gagneux2025convexity}.

\subsubsection*{Virtual Extension Complexity.}

In order to pave the way towards lower bounds for general neural networks, we propose the notion of \emph{virtual extension complexity}
\[
\vxc(P)=\min\{\xc(Q)+\xc(R)\mid \text{$Q$ and $R$ are polytopes with } P+Q=R\}\ ,
\]
where $P+Q=\{p+q\mid p\in P, q\in Q\}$ is the \emph{Minkowski sum}, see \Cref{fig:Minkowski-sum}.
In this definition,~$P$ is a (formal) \emph{Minkowski difference} of two polytopes~$R$ and~$Q$.
Note that this is not the same as $R + (-1)\cdot Q$ but rather the inverse operation of Minkowski addition.
While we start with a convex polytope $P$ in the first place, this notion of Minkowski difference does in general not result in a polytope.
More precisely, it yields a polytope if and only if the normal fan of $R$ refines the normal fan of $Q$, see~\citet{PostnikovReinerWilliams:2008}.
The name \emph{virtual} extension complexity is derived from \emph{virtual polytopes} \citep{panina2015virtual}, a framework for the algebraic study of formal Minkowski differences of polytopes.
Observe that $\vxc(P)\leq \xc(P)$ because we can always choose $Q=\{0\}$ with $\xc(\{0\})=0$.
In that sense, virtual extension complexity is really a strengthening of the ordinary extension complexity.

\begin{figure}[htb]
	\centering
	\begin{tikzpicture}[scale=.8, line join=round, line cap=round]
		
		\tikzset{
			polyA/.style={draw=blue!70!black, fill=blue!25, thick},
			polyB/.style={draw=red!70!black, fill=red!20, thick},
			sumP/.style={draw=green!50!black, fill=green!25, thick},
			copy/.style={draw=gray!60, fill=gray!15, dashed, thin},
		}
		
		\coordinate (A1) at (0,0);
		\coordinate (A2) at (2,0);
		\coordinate (A3) at (2.8,1.2);
		\coordinate (A4) at (1.2,2.2);
		\fill[polyA] (A1) -- (A2) -- (A3) -- (A4) -- cycle;
		\node[blue!70!black] at (1.5,0.9) {$A$};
		
		\begin{scope}[shift={(.4,0)}]
			\coordinate (B1) at (3,0);
			\coordinate (B2) at (4.4,0.1);
			\coordinate (B3) at (4.0,1.4);
			\fill[polyB] (B1) -- (B2) -- (B3) -- cycle;
			\node[red!70!black] at (3.8,0.55) {$B$};
		\end{scope}

		\begin{scope}[shift={(.8,0)}]
			\coordinate (S11) at (5,0);
			\coordinate (S12) at (6.4,0.1);
			\coordinate (S13) at (6.0,1.4);
			\coordinate (S21) at (7.0,0.0);
			\coordinate (S22) at (8.4,0.1);
			\coordinate (S23) at (8.0,2.1);
			\coordinate (S31) at (7.8,1.2);
			\coordinate (S32) at (9.2,1.3);
			\coordinate (S33) at (8.8,2.6);
			\coordinate (S41) at (6.2,2.2);
			\coordinate (S42) at (7.6,2.3);
			\coordinate (S43) at (7.2,3.6);
			\fill[sumP] (S11) -- (S21) -- (S22) -- (S32) -- (S33) -- (S43) -- (S41) -- cycle;
			\node[green!50!black] at (7.3,1.5) {$A+B$};
		\end{scope}

		\node[black] at (3.2,0.7) {$+$};
		\node[black] at (5.3,0.7) {$=$};
		
	\end{tikzpicture}
	\caption{Minkowski sum of two polygons. }
	\label{fig:Minkowski-sum}
\end{figure}

In \Cref{sec:nntovirtual}, we deduce that $\vxc(P)$ is indeed a lower bound for $\nnc(P)$, up to a constant factor.
This leaves the open question to find ways to lower-bound $\vxc(P)$ in order to achieve the original goal to lower-bound $\nnc(P)$.
To this end, it seems to be crucial to generally obtain a better understanding of the complexity measure $\vxc(P)$ and its relation to $\xc(P)$. Observe that $P+Q=R$ is equivalent to $f_P=f_R-f_Q$ pointwise.
Therefore, intuitively, in order to optimize over $P$, one only needs to optimize over $R$ and $Q$ and subtract the results.
We make this intuition formal in \Cref{sec:optimize}, implying that extended formulations for $Q$ and $R$ with small encoding size are sufficient to optimize efficiently over $P$.
Furthermore, in \Cref{sec:genperm}, we provide a class of examples with $P+Q=R$ demonstrating that $\xc(R)$ can be much smaller than $\xc(P)$.
This gives important insights on how extension complexity behaves under Minkowski sum and implies that we really need to look at \emph{both} $\xc(Q)$ and $\xc(R)$ in order to lower-bound $\vxc(P)$.

Overall, we now have four different ways to represent a polytope or its support function through (virtual) extended formulations and (monotone) neural networks. \Cref{fig:relation-measures} shows what we know about how the associated complexity measures $\xc(P)$, $\vxc(P)$, $\nnc(P)$, and $\mnnc(P)$ relate to each other.

\begin{figure}[h]
	\def\dist{-2.4}
	\def\rowgap{-1.6}
	\centering
	\begin{tikzpicture}
		\begin{scope}[every node/.style= {}] 
			\node (xc) at (\dist,-.5*\rowgap) {$\xc(P)$};
			\node (mnnc) at (0,0) {$\mnnc(P)$};
			\node (vxc) at (2*\dist,0) {$\vxc(P)$};
			\node (nnc) at (\dist,.5*\rowgap) {$\nnc(P)$};
		\end{scope}
		
		\begin{scope}[every node/.style={fill=white,circle,font=\footnotesize,inner sep=2pt},
			every edge/.style={draw,black,thick}]
			\path [->] (vxc) edge (xc);
			\path [->] (xc) edge (mnnc);
			\path [->] (vxc) edge (nnc);
			\path [->] (nnc) edge (mnnc);
			\path [->] (xc) edge[out=240, in=120] node{\bfseries ?} (nnc);
			\path [->] (nnc) edge[out=60, in=300] node{\bfseries ?} (xc);
			
		\end{scope}
	\end{tikzpicture}
	\caption{Relations between complexity measures for a polytope $P$. A directed arc means that the tail can be polynomially bounded by the head. It remains an open question whether $\xc(P)$ and $\nnc(P)$ can be related this way.}
	\label{fig:relation-measures}
\end{figure}

\subsection{Further Related Work}\label{sec:relatedwork}

In this paper our lever to prove lower bounds on neural networks is combinatorial optimization problems. Complementing upper bounds were established by \citet{hertrich2024relu} for minimum spanning trees and maximum flows and by \citet{hertrich2025arithmetic} for regular matroids. The latter result also shows that $\vxc(P)\in O(n^3)$ for $P$ being the base polytope of a regular matroid with $n$ elements. This is a first example for which $\vxc(P)$ is smaller than the best known upper bound on $\xc(P)$, which is $O(n^6)$ in this case~\citep{aprile2022regular}.
Furthermore, \citet{hertrich2023provably} proved neural network constructions for the knapsack problem, even though they are different in flavor because some integrality assumptions are made.

Concerning the general expressivity of (piecewise linear) neural networks, the celebrated \emph{universal approximation theorems} state that a single layer of neurons is sufficient to approximate any continuous function on a bounded domain; see \citet{cybenko1989approximation} for the original version for sigmoid activation functions and \citet{leshno1993multilayer} for a version that encompasses ReLU. However, such shallow neural networks usually require a large number of neurons. A sequence of results demonstrates that deeper networks sometimes require exponentially fewer neurons to represent the same functions; see, e.g., \citet{arora2018understanding, eldan2016power, telgarsky2016benefits}. While these works contain exponential lower bounds on the size of neural networks, they are focused on shallow networks. In contrast, we aim to prove lower bounds regardless of the depth.

In terms of exact representation, it is known that a function can be represented if and only if it is CPWL \citep{arora2018understanding}, and it is still an open question whether constant depth is sufficient to do so \citep{hertrich2023towards,haase2023lower,grillo2025depth,bakaev2025better}. Interestingly, also for this question, monotone networks and ICNNs seem to be more amenable for proving lower bounds than their non-monotone counterparts \citep{valerdi2024minimal,bakaev2025depth}. Furthermore, the related question of how to efficiently write a non-convex CPWL function as a difference of two convex ones received quite some attention recently \citep{brandenburg2024decomposition, tran2024minimal}. A neural network architecture that explicitly learns differences of convex functions was proposed by \citet{sankaranarayanan2022cdinn}.

We would like to emphasize that we view neural networks as a model of \emph{real-valued} computation, as opposed to binary models of computation like Boolean circuits and Turing machines. In fact, if one restricts the inputs of a neural network to be binary, it is not too difficult to simulate AND-, OR-, and NOT-gates \citep{mukherjee2017lower}. Thus, in such a binary model, every problem in P can be solved with polynomial-size neural networks. However, such networks would usually be very sensitive to single bits in the input. This is undesirable for practical neural networks and makes it impossible to transform these constructions naturally into exact or approximate neural networks in the real-valued model, compare the discussion by \citet{hertrich2024relu}.
The more useful connection to circuit complexity is through arithmetic \citep{shpilka2010arithmetic}, and in particular tropical circuits \citep{jukna2023tropical}, which are also real-valued models of computation. For example, \citet{jukna2020reciprocal} study the effect of subtraction at different locations within tropical circuits, which is closely related to the idea of differences of monotone neural networks and ICNNs. Again, for a more detailed discussion on the relation of neural networks to arithmetic and tropical circuits, we refer to \citet{hertrich2024relu}.

In fact, the extension complexity has been related before to Boolean and arithmetic circuits, see \citet{fiorini2021strengthening,hrubevs2023shadows}. This is also related to the proof that the permutahedron has extension complexity $\mathcal{O}(n\log n)$ \citep{Goemans:2015}, as this goes via sorting networks, which can be seen as a very specific version of a piecewise-linear arithmetic circuit.

\section{Preliminaries}\label{sec:prelim}

\subsection*{Polyhedra and Polytopes.}

A \emph{polyhedron} $P$ is a finite intersection of halfspaces $P=\{x\in\R^d\mid Ax\leq b\}$; the representation as such an intersection is called $H$-representation.
The \emph{affine hull} of $P$ is the smallest affine subspace containing~$P$ and the \emph{dimension} $\dim(P)$ is defined as the dimension of its affine hull.
A \emph{face} $F$ of~$P$ is the set of maximizers over $P$ with respect to a linear objective function: $F=\arg\max \{c^\top x\mid x\in P\}$ for some $c \in \mathbb{R}^d$. 
Faces of polyhedra are polyhedra themselves.
Zero-dimensional faces are called \emph{vertices}.
Faces of dimension $\dim(P) - 1$ are called \emph{facets}; let $h(P)$ be the number of such facets of $P$.
The minimal $H$-representation contains precisely one inequality for each facet and potentially a bunch of equalities to describe the affine hull of $P$.

If $P$ is bounded, it is also called a \emph{polytope}. By the Minkowski-Weyl theorem, a polytope $P$ can equivalently be written as convex hull of finitely many points. The inclusion-wise minimal such representation is precisely the convex hull of the vertices $V(P)$, the \emph{$V$-representation}.
We set $v(P) = \lvert V(P)\rvert$.

Each of $V$- and $H$-representation of a polytope can be exponentially smaller than the other one.
For example, the $d$-dimensional cube $\{x\in \R^d\mid \lVert x \rVert_\infty \leq 1\}$ has $2^d$ vertices but only $2d$ facets.
On the other hand, the $d$-dimensional cross-polytope $\{x\in \R^d\mid \lVert x \rVert_1 \leq 1\}$ has only $2d$ vertices, but $2^d$ facets. Standard references for polytope theory are \citet{grunbaum2003convex,ziegler2012lectures}.

\subsection*{Extended Formulations.}
Sometimes, a more compact way of representing a polyhedron $P$ can be obtained by representing it as the projection of a higher-dimensional polyhedron $Q\subseteq\R^{e}$ with $e\geq d$. Such a polyhedron $Q$ with $\pi(Q)=P$ for an affine projection $\pi$ is called an \emph{extended formulation} of $P\subseteq\R^d$.
The \emph{extension complexity} of $P$ is defined as $\xc(P)=\min\{h(Q)\mid \pi(Q)=P\}$. 
The extension complexity of a polytope $P$ is upper bounded by $\min\{v(P),h(P)\}$, but can be exponentially smaller. For example, the spanning tree polytope of a graph with $n$ vertices has extension complexity $\mathcal{O}(n^3)$ \citep{martin1991using}, but the polytope itself has exponentially many vertices and facets in $n$. See \citet{conforti2013extended} for a (not very recent) survey on extended formulations.

\subsection*{Support Functions.} For a polytope $P\subseteq \mathbb{R}^d$, its \emph{support function} $f_P \colon \mathbb{R}^d \to \mathbb{R}$ is defined as $f_P(c)=\max_{x\in P} c^\top x$, see \Cref{fig:correspondence-polytope-support-function}.
In other words, it maps a linear objective direction to the optimal value obtained by optimizing over the polytope.
Support functions are convex, continuous, piecewise linear, and \emph{positively homogeneous} functions; and every function with these properties is a support function of a polytope. Here, a function $f \colon \mathbb{R}^d \to \mathbb{R}$ is \emph{positively homogeneous} if $f(\lambda x)=\lambda f(x)$ for all scalars $\lambda\geq 0$.
Let $\mathcal{F}^d$ be the set of all such functions from $\R^d\to\R$ and let $\mathcal{P}^d$ be the set of all polytopes embedded in $\R^d$. Then the map $\mathcal{P}^d\to\mathcal{F}^d$, $P\mapsto f_P$, is a bijection that is compatible with a certain set of operations, see \Cref{fig:sum-support-functions}:
\begin{enumerate}[(i)]
	\item $f_{P+Q} = f_P + f_Q$, where the ``+'' on the left-hand side is \emph{Minkowski sum}; 
	\item $f_{\lambda P} = \lambda f_P$ for $\lambda\geq 0$, where $\lambda P = \{\lambda p\mid p\in P\}$ is the \emph{dilation} of $P$ by~$\lambda$; 
	\item $f_{\conv(P\cup Q)} = \max\{f_P, f_Q\}$.
\end{enumerate}

The inverse map that maps a support function $f_P$ to its unique associated polytope $P$ is well-studied in tropical geometry.
In fact, borrowing the name from (tropical) polynomials, we call $P$ the \emph{Newton polytope} of $f_P$.
The connection between CPWL functions and their Newton polytopes has previously been used in the study of neural networks and their expressivity \citep{zhang2018tropical,montufar2022sharp,haase2023lower,hertrich2023towards,brandenburg2024real,misiakos2022neural}.

\begin{figure}[htb]
  \centering

  \begin{tikzpicture}[scale=0.4]

    \begin{scope}[shift={(-8.6,0)}]
      \draw[help lines] (-2.2,-1.2) grid (3.4,4.2);
  \draw[black,->,thick] (-2.2,0) -- (3.4,0);
  \draw[black,->,thick] (0,-1.2) -- (0,4.2);

  \draw[ultra thick,red] (-1,0) -- (2,0);
  \draw[thick,blue] (-2.1,2.1) -- (0,0);
  \draw[thick,blue] (0,0) -- (2.2,4.4);
    \end{scope}

    \node[black] at (-3.7,1.5) {$+$};

  \begin{scope}[shift={(0,0)}]
  \draw[help lines] (-2.2,-1.2) grid (3.4,4.2);
  \draw[black,->,thick] (-2.2,0) -- (3.4,0);
  \draw[black,->,thick] (0,-1.2) -- (0,4.2);

  \draw[ultra thick,red] (-1,0) -- (1,0);
  \draw[thick,blue] (-2.1,2.1) -- (0,0);
  \draw[thick,blue] (0,0) -- (3.2,3.2);
\end{scope}

  \node[black] at (4.7,1.5) {$=$};
  
  \begin{scope}[shift={(8.6,0)}]
  \draw[help lines] (-2.2,-1.2) grid (3.4,4.2);
  \draw[black,->,thick] (-2.2,0) -- (3.4,0);
  \draw[black,->,thick] (0,-1.2) -- (0,4.2);

  \draw[ultra thick,red] (-2,0) -- (3,0);
  \draw[thick,blue] (-2.1,4.2) -- (0,0);
  \draw[thick,blue] (0,0) -- (1.4,4.2);
  \end{scope}
  
  \end{tikzpicture}

  \caption{The pointwise sum of support functions (blue) translates to the Minkowski sum of the associated Newton polytopes (red intervals). }
  \label{fig:sum-support-functions}
\end{figure}

\subsection*{Neural Networks.}

For an introduction to neural networks, we refer to \citet{Goodfellow-et-al-2016}.
In this paper, we focus on so-called \emph{rank-$k$ maxout neural networks} for some natural number $k\geq2$. 
They generalize the well-known neural networks with \emph{rectified linear unit} (\emph{ReLU}) activations.
Such a neural network is  based on a directed, acyclic graph with node set $V$ and arcs $A \subseteq V \times V$, where the nodes are called \emph{neurons} and act as computational units.
The $d\geq1$ neurons with in-degree zero are called \emph{input neurons}, all other $s\geq1$ neurons are \emph{maxout units}. We assume that among the maxout units there exists exactly one neuron with out-degree zero, the \emph{output neuron}.
All neurons that are neither input nor output neurons are called \emph{hidden neurons}.
The \emph{size} of the neural network is defined as the number $s$ of maxout units.
We define our neural networks without biases, see \Cref{rem:biases} below.

Each neuron $v$ in the neural network defines a function $z_v\colon\R^d\to \R$ as follows.
Each of the $d$ input neurons is associated with exactly one element $x_i$ of the input vector $x\in\R^d$. Such an input neuron $v$ simply outputs $z_v(x)=x_i$.
Each maxout unit $v$ comes with a tuple of \emph{weights} $w_{uv}^i\in\R$ for $i=1,\dots,k$ and all $u\in \dinv$, where $\dinv$ is the set of in-neighbors of $v$.
The maxout neuron $v$ represents the following expression dependent on the outputs of its in-neighbors:
\[
z_v=\max\Big\{\sum_{u\in\dinv} w_{uv}^i z_u \Bigm\vert i=1,\dots, k\Big\}\ ,
\]
where $k$ is called the \emph{rank} of the maxout unit.

Finally, the output of the network is defined to be the output $z_v$ for the output neuron~$v$.
Note that our neural network represents a scalar-valued function as we assumed that there is only a single output neuron. 
A maxout network is called \emph{monotone} if all weights of all maxout neurons are nonnegative \citep{mikulincer2022size}. It is called an \emph{input-convex} neural network (ICNN) if negative weights are only allowed on arcs leaving an input neuron \citep{amos2017input}.

For a polytope $P$, we define the \emph{neural network complexity} $\nnc(P)$ as the minimum size of a \emph{rank-$2$} maxout network representing $f_P$. Moreover, we define the \emph{monotone neural network complexity} $\mnnc(P)$ as the minimum size of a rank-$2$ maxout ICNN representing $f_P$.
We will see that, if $f_P$ is a \emph{monotone function}, this exactly equals the minimum size of a monotone rank-$2$ maxout network.
While the previous use of the word `monotone' was for neural networks, recall that a multivariate function is called monotone if, for $x,y\in \R^d$, the componentwise inequality $x\leq y$ implies $f(x)\leq f(y)$.

\begin{remark}[biases]\label{rem:biases}
	In practice, the units usually also involve biases.
	In the following, we will mainly focus on support functions of polytopes, which are positively homogeneous.
	Therefore, we can omit the bias in the definition of neural networks without loss of generality, compare \citet[Proposition~2.3]{hertrich2023towards}.
	With our definition of a maxout network, a function $f\colon\R^d\to\R$ can be represented by such a network if and only if it is CPWL and positively homogeneous.
\end{remark}

\begin{remark}[maxout ranks and ReLU]
	We sometimes focus on rank-2 maxout networks. To justify this,
	note that a rank-$k$ maxout unit can be simulated with $k-1$ \mbox{rank-2} maxout units, which allows to transfer size bounds to other ranks if $k$ is viewed as a fixed constant.
	Moreover, observe that a rank-2 maxout network can be simulated by three parallel ReLU units because $\max\{x,y\}=\max\{0,x-y\}+\max\{0,y\}-\max\{0,-y\}$. Note, however, that this introduces negative weights to the ReLU network even if the maxout network was monotone. Therefore, unlike in the non-monotone case, monotone ReLU networks are indeed weaker than monotone maxout networks.
\end{remark}

\section{Lower Bounds for Monotone and Input-Convex Neural Networks}\label{sec:monotone}

The goal of this section is to prove the following theorem and use it to derive exponential lower bounds on monotone and input-convex neural networks.

\begin{theorem}\label{thm:mnncbound}
	Let $P$ be a polytope. Then $\xc(P)\leq 2\cdot\mnnc(P)$.
\end{theorem}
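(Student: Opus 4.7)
The plan is to chain together the two propositions already proved in the excerpt, namely \Cref{prop:xcepi} (size of a monotone maxout network controls the extension complexity of the epigraph of its output function) and \Cref{prop:xcdual} (extension complexity of a polytope equals that of the epigraph of its support function).

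Concretely, let $s = \mnnc(P)$ and fix a monotone rank-$2$ maxout network of size $s$ that represents the support function $f_P$. Applying \Cref{prop:xcepi} with $k=2$ immediately yields $\xc(\epi(f_P)) \leq 2s = 2\cdot\mnnc(P)$. Applying \Cref{prop:xcdual}, which requires $\dim(P)\geq 1$, gives $\xc(P) = \xc(\epi(f_P))$, and combining these two inequalities finishes the proof.

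The only thing left to check is the degenerate case $\dim(P)=0$, in which $P$ is a single point $\{p\}$. Here $f_P(c) = c^\top p$ is linear, so $\xc(P)=0$, and the inequality $\xc(P)\leq 2\cdot\mnnc(P)$ holds trivially. This boundary case is the only technicality, and since it is handled trivially, there is no real obstacle to the argument; the proof is essentially a one-line composition of \Cref{prop:xcepi} and \Cref{prop:xcdual}, with the factor $2$ coming from the fact that $\mnnc(P)$ is defined using rank-$2$ maxout networks.
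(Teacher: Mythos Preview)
Your proof is correct and follows exactly the same approach as the paper's own proof, which simply says to combine \Cref{prop:xcepi} and \Cref{prop:xcdual}, noting that the factor $2$ comes from the rank-$2$ convention in the definition of $\mnnc$. Your explicit treatment of the degenerate case $\dim(P)=0$ is a welcome addition that the paper's one-line proof glosses over.
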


Before we dive into proving \Cref{thm:mnncbound}, however, we argue that for computing a monotone function, monotone and input-convex maxout networks are equally efficient. 

\begin{proposition}\label{prop:inputconvex}
	If a maxout ICNN represents a monotone function $f$, then it can be converted into a monotone neural network of the same size that represents the same function.
\end{proposition}
\begin{proof}
	Let $\mathcal{N}_0$ be the maxout ICNN of size $s$ representing $f$. Without loss of generality we assume that every input neuron has a connection to every maxout unit. We can always achieve this by introducing arcs with weights equal to zero. This does not increase the number of neurons and therefore not the size of the neural network according to our definition of \emph{size}.\footnote{However, it does potentially increase the number of arcs and therefore (after modifying the weights) the number of parameters of the network.} Let $v_1,v_2,\dots,v_s$ be a topological order of the maxout units of $\mathcal{N}_0$.
	
	We construct a sequence of equivalent neural networks $\mathcal{N}_1,\dots,\mathcal{N}_{s-1}$, removing incoming negative weights neuron by neuron. We only change weights on arcs leaving input neurons.
	All other weights remain unchanged in this process, as they are already nonnegative, because $\mathcal{N}_0$ is input-convex. Our construction ensures that in $\mathcal{N}_p$ none of the neurons $v_1$ to $v_p$ will have negative weights on their incoming arcs.
	
	To go from $\mathcal{N}_{p-1}$ to $\mathcal{N}_p$ for some $p \geq 1$, we leave the first $p-1$ maxout neurons unchanged.
	Let $I\subseteq V$ be the set of input neurons. Consider the weights $w_{uv}^i$ on arcs entering $v\coloneqq v_p$ from an input neuron $u\in I$. Let $\gamma_u\coloneqq\min_{i=1,\dots,k} w_{uv}^i$ be the smallest such weight for each $u\in I$, which might be negative or positive. Observe that
	{\small
		\begin{align}\label{eq:newweights2}
			z_v&=\max\Big\{\sum_{u\in I} (w_{uv}^i - \gamma_u ) z_u + \sum_{u\in\dinv\setminus I} w_{uv}^i z_u \Bigm\vert i=1,\dots,k\Big\} + \sum_{u\in I}  \gamma_u  z_u  \enspace .
		\end{align}
	}%
	This allows to construct $\mathcal{N}_p$ from $\mathcal{N}_{p-1}$ by the following operation. 
	Each weight $w_{uv}^i$ of $v=v_p$ coming from an input neuron $u\in I$ is set to $w_{uv}^i - \gamma_u\geq0$.
	To make up for this change, one has to correct the weights from input neurons to neurons $v_q$ with $q>p$ to incorporate the term outside the maximum in \eqref{eq:newweights2}.
	More precisely, if $\widetilde{v}\coloneqq v_q$ is an out-neighbor of $v=v_p$ and $u\in I$ is an input neuron, then we need to update the weight $w_{u\widetilde{v}}^i$ to $w_{u\widetilde{v}}^i+w_{v\widetilde{v}}^i\gamma_u$. It is easy to verify that this weight update exactly recovers the missing term from \eqref{eq:newweights2} when computing $z_{\widetilde{v}}$.
	This might introduce new negative weights, but all of them are associated with arcs from input neurons to neurons in the topological order after~$p$.
	So, after doing this operation, we obtain an equivalent neural network $\mathcal{N}_p$ with only nonnegative weights associated with the neurons $v_1$ to $v_p$.
	
	We continue that way until we obtain $\mathcal{N}_{s-1}$, where the only weights that could potentially be negative are those between an input neuron and the output neuron $v_s$.
	We show that, in fact, these weights are nonnegative, too, and $\mathcal{N}_{s-1}$ is our desired monotone neural network computing $f$.
	
	To this end, we first show by induction on $p=1,\dots,s-1$ that for a negative unit vector $x=-e_j$ as input, each neuron $v=v_p$ outputs $z_v(-e_j)=0$. If $p=1$, then the only incoming edges of $v=v_1$ are coming from input neurons. Moreover, since we are inputting $-e_j$, only the $j$-th input neuron, call it $u$, propagates a nonzero value. By the choice of $\gamma_u$, there must be an index $i$ such that our updated weight $w_{uv}^i - \gamma_u$ is exactly zero, while it is nonnegative for all other indices. Therefore, upon input $-e_j$, the maximum expression evaluates to $0$, settling the induction start. For the induction step, observe that the very same argument applies, since by induction all previous neurons output zero upon input $-e_j$.
	
	As a consequence of the latter claim,
	we obtain that the output $z_{v_s}(-e_j)$ of $\mathcal{N}_{s-1}$ is precisely the negated weight from $u$ to $v_s$. Since $z_{v_s}(0)=0$, monotonicity of the final function implies that the output on $-e_j$ must be nonpositive, implying that the weight must be nonnegative, as claimed. Thus, we just showed that also the incoming weights of $v_s$ must be nonnegative in $\mathcal{N}_{s-1}$, implying that $\mathcal{N}_{s-1}$ is monotone.
\end{proof}

Observe that the support function $f_P$ of a polytope $P$ is monotone if and only if $P$ is contained in the nonnegative orthant $\R_{\geq0}^d$. \Cref{prop:inputconvex} implies that for the definition of $\mnnc(P)$ of a polytope $P\subseteq\R_{\geq0}^d$, it does not matter whether we use monotone or input-convex neural networks.

We now prove \Cref{thm:mnncbound} in two steps, represented by the next two propositions. The first proposition shows that small ICNNs imply small extended formulations of the epigraph. For a convex CPWL function, the \emph{epigraph} is the polyhedron $\epi(f)=\{(x,t)\in\R^d\times\R\mid t\geq f(x)\}$.
The proposition is related to earlier works connecting the extension complexity to Boolean and arithmetic circuit complexity, see e.g., \citet{fiorini2021strengthening,hrubevs2023shadows}, and also the result by \citet{Goemans:2015} on using sorting networks for constructing an extended formulation of the permutahedron. A similar statement also appears in the paper introducing ICNNs, where \citet{amos2017input} prove that ICNN inference can be written as a linear program.

\begin{proposition}\label{prop:xcepi}
	If $f\colon\R^d\to\R$ is represented by a rank-$k$ maxout ICNN of size $s\geq1$, then $\xc(\epi(f))\leq ks$.
\end{proposition}
\begin{proof}
	Based on the ICNN, we construct an extended formulation for $\epi(f)$ of size $ks$.
	For each neuron $v$, we introduce a variable $y_v$.
	For ease of notation, we identify $y_v$ for an input neuron $v$ with the corresponding entry of the input vector $x$. Similarly, if $v$ is the output neuron, we identify $y_v$ with $t$. Then, the extended formulation for the epigraph is given by the system
	
	\begin{align*}
		y_v \geq \sum_{u\in\dinv} w_{uv}^i y_u \quad\forall\ i=1,\dots,k\quad\forall\text{ maxout units } v\enspace .
	\end{align*}
	
	We claim that this formulation consisting of $ks$ inequalities exactly describes $\epi(f)$.
	The crucial ingredient is the equivalence
	\begin{align*}
		y_v \geq \sum_{u\in\dinv} w_{uv}^i y_u \quad\forall\ i=1,\dots,k \quad\Leftrightarrow\quad y_v \geq \max_{i=1,\dots,k} \sum_{u\in\dinv} w_{uv}^i y_u \enspace .
	\end{align*}

	Indeed, if $(x,t)\in\epi(f)$, setting $y_v\coloneqq z_v(x)$ to the value represented by each hidden neuron $v$ yields by construction a feasible solution with respect to all inequalities.
	Conversely, given a feasible solution $(x,t,y)$, one can inductively propagate the validity of the inequality $y_v\geq z_v(x)$ along a topological order of the neurons.
        Note that, here in the induction step, we need to use that weights between hidden neurons are nonnegative, as the network is input-convex.
        The potentially negative weights on outgoing edges of the input neurons are no problem because no other variable is multiplied by them in the propagation. 
        Then, for the output neuron, we obtain $t\geq f(x)$ and thus $(x,t)\in\epi(f)$.
\end{proof}

\begin{remark}
	In the literature, sometimes maxout networks with different maxout ranks at different units are considered. Suppose each maxout unit $v$ has its own rank $k_v\geq 2$. It is not difficult to see that the proof above generalizes to obtain a bound of $\sum_{v} k_v$ on the extension complexity of the epigraph.
\end{remark}

The second ingredient for proving \Cref{thm:mnncbound} is that the extension complexity of a polytope equals the extension complexity of the epigraph of its support function.
This is related to other previous results about the extension complexity being (almost) preserved under various notions of duality, see, e.g., \citet{martin1991using,gouveia2013nonnegative,weltge2015sizes}.

\begin{proposition}\label{prop:xcdual}
	It holds that $\xc(P) = \xc(\epi(f_P))$ for all polytopes $P$ with $\dim(P)\geq1$.
\end{proposition}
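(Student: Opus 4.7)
My plan is to establish the equality via classical duality results on extension complexity that relate $P$ to its polar $P^\circ$ and to the homogenization cone over $P^\circ$.

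First I would reduce to the case where $P$ is full-dimensional and $0 \in \text{int}(P)$. The hypothesis $\dim(P) \geq 1$ rules out the degenerate case of a single point, where the claim actually fails ($\xc(P) = 0$ while $\epi(f_P)$ is a halfspace). Translating $P$ by $-p_0$ changes $f_P$ by the linear term $-c^\top p_0$, so $\epi(f_{P-p_0})$ is the image of $\epi(f_P)$ under the linear automorphism $(c,t)\mapsto (c, t - c^\top p_0)$; extension complexity is invariant on both sides. If $\dim(P) < d$, then $f_P$ depends only on the component of $c$ parallel to $\text{aff}(P)$, so $\epi(f_P)$ has lineality space $\text{aff}(P)^\perp \times \{0\}$, and quotienting by it reduces to a full-dimensional instance of dimension $\dim(P)+1$ without changing $\xc$ on either side.

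With $0 \in \text{int}(P)$, the polar $P^\circ = \{c : f_P(c) \leq 1\}$ is a bounded polytope, and positive homogeneity of $f_P$ gives the key identification $\epi(f_P) = \text{cone}(P^\circ \times \{1\})$. I would then chain two known invariances. First, $\xc(\text{cone}(Q \times \{1\})) = \xc(Q)$ for any bounded polytope $Q$ with $0 \in \text{int}(Q)$: given an extended formulation $R = \{y : Ay \leq b\}$ of $Q$, its homogenization $\{(y,\lambda) : Ay \leq \lambda b\}$ has the same number of facets because $0 \in \text{int}(Q)$ produces a nonnegative combination of the rows of $A$ summing to zero with strictly positive right-hand side, which makes $\lambda \geq 0$ redundant; conversely, slicing an extension of the cone at $\lambda = 1$ yields an extension of $Q$. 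Second, $\xc(P^\circ) = \xc(P)$ by Yannakakis's theorem: the slack matrices of $P$ and $P^\circ$ are transposes of each other under the natural vertex-facet correspondence of polarity, and nonnegative rank is invariant under transposition. Combining yields $\xc(\epi(f_P)) = \xc(\text{cone}(P^\circ \times \{1\})) = \xc(P^\circ) = \xc(P)$.

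The main obstacle is the first invariance, namely verifying that $\lambda \geq 0$ is implied by the homogenized inequalities in the chosen extended formulation. This requires either an explicit Farkas-type argument leveraging $0 \in \text{int}(Q)$ (to guarantee that the rows of $A$ admit the desired nonnegative combination), or an appeal to an existing lemma of this form from the extended-formulations literature (e.g.,~\cite{weltge2015sizes}). The lineality quotient in the reduction step also warrants some care so that it is carried out consistently on both the polytope and its purported extended formulations.
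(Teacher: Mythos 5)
Your proposal is correct and follows essentially the same route as the paper: identify $\epi(f_P)$ with a dual/homogenized copy of $P$ and invoke the facts that extension complexity is preserved under homogenization and under polarity. The only real difference is the order of operations --- you polarize the polytope first (via Yannakakis' slack-matrix transposition, which is what forces your normalization $0\in\operatorname{int}(P)$, and note that the $\lambda\geq 0$ redundancy you worry about follows from boundedness of the extension rather than from $0\in\operatorname{int}(Q)$) and then homogenize, whereas the paper homogenizes $P$ first and then takes the cone polar, citing \cite{gouveia2013nonnegative} for both invariances.
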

\begin{proof}
	The key idea behind this statement is the well-known fact that the dual of the epigraph of $f_P$ as a cone in $\R^{d+1}$ equals the homogenization of $P$, see, e.g., \citet{Bertsekas:2009,Rockafellar1970}. 
	We give a short geometric proof of this fact to show the ingredients of the construction. 
	A point $(c,t)$ in the epigraph $\epi(f_P)$ fulfills the equivalent conditions
	\begin{align*}
		t \geq f_P(c) \ \Leftrightarrow\ t \geq \max_{x \in P} c^{\top}x\ \Leftrightarrow\ t \geq c^{\top}x \;\forall x \in P\ \Leftrightarrow\ c^{\top}x - t \leq 0\;\forall x \in P \ .
	\end{align*}
	This yields the equivalent representation 
	\begin{align*}
		\epi(f_P) = \Big\{(c,t) \in \R^d \times \R \Bigm\vert (c^{\top},t)\begin{pmatrix}x\\-1\end{pmatrix} \leq 0\; \forall x \in P \Big\} \enspace .
	\end{align*}
	This is the polar cone of the cone over $P \times \{-1\}$ that is
	\begin{align*}
		\epi(f_P) = \text{cone}\left(\left\{ (x,-1)\mid x \in P \right\}\right)^\star \enspace .
	\end{align*}
	To finish the proof, we argue that (de-)homogenization and cone polarity preserve extension complexity.
	The claim about (de-)homogenization follows from the discussion before \citet[Theorem~6]{gouveia2013nonnegative}.
	The statement about cone polarity is implied by the discussion before \citet[Proposition~2]{gouveia2013nonnegative}.   
\end{proof}

Now we are ready to prove \Cref{thm:mnncbound}.

\begin{proofwithcaption}{Proof of \Cref{thm:mnncbound}}
	Combine \Cref{prop:xcepi,prop:xcdual}. The factor $2$ arises from \Cref{prop:xcepi} as $\mnnc(P)$ is defined with $k=2$.
\end{proofwithcaption}

Now we apply \Cref{thm:mnncbound} to obtain strong lower bounds on monotone and input-convex neural networks based on known bounds on the extension complexity.
For a complete graph $(V,E)$ with $n$ vertices and a weight vector $c=(c_e)_{e\in E} \in \R^E$, let $f_{\mathrm{MAT}}(c)$ be the value of a maximum weight matching and $f_{\mathrm{TSP}}(c)$ the value of a longest\footnote{This is to make the function convex; this is equivalent to finding the shortest tour with respect to the negated weights.} traveling salesperson (TSP) tour with respect to $c$.
We denote the matching polytope by $P_{\mathrm{MAT}}$ and the TSP polytope by $P_{\mathrm{TSP}}$. 

\begin{theorem} If a monotone or input-convex maxout network represents $f_{\mathrm{MAT}}$ or $f_{\mathrm{TSP}}$, then it must have size at least $2^{\Omega(n)}$.
\end{theorem}
\begin{proof}
	The functions $f_{\mathrm{MAT}}$ and $f_{\mathrm{TSP}}$ are the support functions of $P_{\mathrm{MAT}}$ and $P_{\mathrm{TSP}}$, respectively.
	Both have extension complexity $2^{\Omega(n)}$ \citep{rothvoss2017matching}.
	Now, the result follows by \Cref{thm:mnncbound}.
\end{proof}

In the same way, one can prove lower bounds for neural networks computing the support function of any polytope with high extension complexity, e.g., for neural networks solving the MAX-CUT problem or the stable set problem.
The corresponding lower bounds on the extension complexity were first proven by \citet{fiorini2015exponential}, who also derived that the extension complexity of the TSP polytope is at least $2^{\Omega(\sqrt{n})}$, before \citet{rothvoss2017matching} improved it to $2^{\Omega(n)}$.

We would like to explicitly highlight one additional result building on the following, which we will use again later. 

\begin{theorem}[{\citet{rothvoss2013some}}] \label{thm:xc-matroid}
	There is a family of matroids $M_n$ on $n$ elements whose matroid base polytopes have extension complexity $2^{\Omega(n)}$.  
\end{theorem}

In fact, the proof of this theorem in \citet{rothvoss2013some} shows that almost all matroid base polytopes must have exponential extension complexity.
With \Cref{thm:mnncbound} and \Cref{prop:inputconvex}, we get the following implication. For a family of matroids $M_n$ on $n$ elements and a weight vector $c\in\R^n$, let $f_n(c)$ be the value of the maximum weight basis in~$M_n$.

\begin{theorem}
	There exists a family of matroids $M_n$ on $n$ elements such that every monotone or input-convex maxout neural network representing $f_n$ must have size $2^{\Omega(n)}$.
\end{theorem}

For machine learning applications, it is arguably less important to obtain neural networks that \emph{exactly} represent a given function.
Instead, approximating the desired output is often sufficient.
We demonstrate that also in the approximate setting, lower bounds on the extension complexity can be transferred to neural networks. We would like to emphasize that there exist many different ways to define what it means to approximate a given function or problem ``sufficiently well'', both in optimization and machine learning.
The ``correct'' notion always depends on the context.
See also \citet{misiakos2022neural} for a discussion of how approximations can be translated between polytopes and neural networks.

\begin{theorem}\label{thm:matchinginapprox}
	Suppose a monotone or input-convex maxout neural network represents a function $f$ such that $f_{\mathrm{MAT}}(c)\leq f(c) \leq (1+\epsilon) f_{\mathrm{MAT}}(c)$ for all $c\in\R_{\geq0}^E$. Then the neural network must have size at least $2^{\Omega(\min\{n,1/\epsilon\})}$.
\end{theorem}
\begin{proof}
	Consider the Newton polytope $P_f$ of the function $f$ computed by the neural network.
	We slightly modify $P_f$ to obtain a polytope $Q$ in the following sense: firstly, we want to include only nonnegative vectors of $P_f$, and secondly, we also want to include all nonnegative vectors that are smaller than a vector in $P_f$. This can be formulated as
	\begin{align}Q\coloneqq \{x\in\R^E\mid \exists y\in P_f\colon 0\leq x\leq y \}\ .\label{eq:monotonepolytope}\end{align}
	With that modification, $Q$ is a ``monotone polytope''\footnote{Note that this notion of a monotone polytope does not correspond to the notion of monotone neural networks.} as defined in \citet{rothvoss2017matching}.
	
	Let us analyze what we can say about the support function $f_Q$ of the modified polytope for $c\in\R_{\geq0}^E$. 
	Firstly, nonnegativity of $c$ implies $f_Q(c) \leq \max\{c^\top x \mid x\in P_f \cap \R_{\geq0}^E\} \leq f(c)\leq (1+\epsilon) f_{\mathrm{MAT}}(c)$. Secondly, we claim that also $f_Q(c)\geq f_{\mathrm{MAT}}(c)$ holds for all $c\in\R_{\geq0}^E$. To see this, assume the contrary, namely the existence of some $c\in\R_{\geq0}^E$ with $f_Q(c)< f_{\mathrm{MAT}}(c)$. Let $x\in P_{\mathrm{MAT}}$ be an optimal vertex of the matching polytope in $c$-direction. It follows that $x\notin Q$. As $P_{\mathrm{MAT}}\subseteq\R_{\geq0}^E$, by definition of $Q$, it follows that $P_f$ and the set $X^+\coloneqq x+\R_{\geq0}^E$ are disjoint. As these two sets are polyhedra, there must exist a separating hyperplane, that is, some vector $a\in \R^E$ and some value $\gamma\in\R$ with $a^\top y \leq \gamma$ for all $y\in P_f$ and $a^\top y > \gamma$ for all $y\in X^+$. The latter implies that $a$ is nonnegative, as the recession cone of $X^+$ is the nonnegative orthant. However, we then have $f_{\mathrm{MAT}}(a)\geq a^\top x > \gamma \geq f(a)$, contradicting the assumption of the theorem.	
	
	In conclusion, also for $Q$ we have the inequality $f_{\mathrm{MAT}}(c)\leq f_{Q}(c) \leq (1+\epsilon) f_{\mathrm{MAT}}(c)$ for all $c\in\R_{\geq0}^E$.
	As argued around \citet[Equation~7]{rothvoss2017matching}, this is equivalent to $P_{\mathrm{MAT}}\subseteq Q \subseteq (1+\epsilon)P_{\mathrm{MAT}}$.
	By \citet[Corollary~4.2]{rothvoss2017matching}, this implies that $\xc(Q)\geq 2^{\Omega(\min\{n,1/\epsilon\})}$; see also \citet{sinha2018lower,braun2014matching}.
	The definition \eqref{eq:monotonepolytope} of $Q$ implies $\xc(Q)\leq \xc(P_f)+2\cdot\lvert E\rvert\leq \xc(P_f)+\mathcal{O}(n^2)$. Thus, we obtain the same asymptotic lower bound on $\xc(P_f)$.
	The statement then follows by \Cref{thm:mnncbound}.
\end{proof}

In particular, monotone and input-convex neural networks cannot serve as \emph{fully-poly\-nomial approximation schemes} for the matching problem, in the sense that they cannot have polynomial size in both $n$ and $1/\epsilon$ to approximate the matching problem to $\epsilon$-precision.
We understand \Cref{thm:matchinginapprox} as a prototype for the fact that, in principle, inapproximability results from extended formulations can be transferred to neural networks. See, e.g., a framework for such lower bounds in \citet{braun2015approximation} and inapproximability up to a factor $2$ for vertex cover in \citet{bazzi2019no}.

\section{Virtual Extension Complexity}

In this section, we study the novel concept of virtual extension complexity $\vxc(P)$. We first prove that $\vxc(P)$ lower-bounds the size of general neural networks, motivating the study of $\vxc(P)$ from a machine learning perspective. We then argue that, even though virtual extended formulations are more general than the ordinary extended formulations, they can still be used to optimize efficiently via linear programming, motivating the study of $\vxc(P)$ from a combinatorial optimization perspective. Finally, we give an example demonstrating that Minkowski sum can indeed drastically reduce the extension complexity. More precisely, there exist polytopes $P+Q=R$ with $\xc(R)$ being way smaller than $\xc(P)$. This means that for obtaining useful lower bounds on $\vxc(P)$, we indeed need to look at both, $\xc(Q)$ and $\xc(R)$, and cannot just focus on $\xc(R)$.

\subsection{Neural Networks are Virtual Extended Formulations}
\label{sec:nntovirtual}

We prove the following theorem stating that neural network sizes can be lower-bounded through virtual extension complexity.

\begin{theorem}\label{thm:nncbound}
	Let $P$ be a polytope. Then $\vxc(P)\leq 4\cdot \nnc(P)$.
\end{theorem}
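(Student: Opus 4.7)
The plan is to construct, from any rank-$2$ maxout network of size $s$ computing $f_P$, two polytopes $R$ and $Q$ with $P+Q=R$ and $\xc(R)+\xc(Q)\leq 4s$. The core idea is to decompose each neuron's computation as a difference of two support functions that are themselves computed in a monotone fashion, and then to bound their extension complexities via a parametric LP reading off the network structure.

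\emph{Per-neuron decomposition.} For every neuron $v$ I build convex, CPWL, positively homogeneous functions $p_v,n_v\colon\R^d\to\R$ with $z_v=p_v-n_v$. For an input neuron outputting $c_i$, set $p_v(c):=c_i$ and $n_v(c):=0$, which are the support functions of $\{e_i\}$ and $\{0\}$. For a maxout $z_v=\max\{a,b\}$ with $a=\sum_u w^1_{uv}z_u$ and $b=\sum_u w^2_{uv}z_u$, I substitute $z_u=p_u-n_u$ and split each weighted sum by the sign of its weights to obtain $a=A_p-A_n$ and $b=B_p-B_n$, where $A_p,A_n,B_p,B_n$ are nonnegative linear combinations of the $p_u,n_u$'s. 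The elementary identity $\max\{a-b,c-d\}=\max\{a+d,b+c\}-(b+d)$ then yields
\[
z_v=\underbrace{\max\{A_p+B_n,\,A_n+B_p\}}_{=:\,p_v}\;-\;\underbrace{(A_n+B_n)}_{=:\,n_v},
\]
and by construction $p_v$ and $n_v$ are convex, CPWL, and positively homogeneous, hence support functions of polytopes $R_v$ and $Q_v$.

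\emph{Parametric LP for $R$ and $Q$.} Introduce scalar variables $t_{p_v},t_{n_v}$ for each neuron~$v$. For each input~$v$ set $t_{p_v}=c_i$ and $t_{n_v}=0$ as equalities. For each maxout $v$ add the constraints
\[
t_{p_v}\geq A_p+B_n,\qquad t_{p_v}\geq A_n+B_p,\qquad t_{n_v}=A_n+B_n,
\]
where each right-hand side is a nonnegative linear combination of $t_{p_u},t_{n_u}$'s. I claim that $\min t_{p_\text{out}}$ over this system equals $p_\text{out}(c)=f_{R_\text{out}}(c)$ and $\min t_{n_\text{out}}$ equals $f_{Q_\text{out}}(c)$. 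Both follow by induction on the DAG layers: the setting $t_{p_v}=p_v(c),\,t_{n_v}=n_v(c)$ is feasible and realizes the objectives, while for any feasible $t$ a parallel induction (using that all coefficients on the right-hand sides are nonnegative) yields $t_{p_v}\geq p_v(c)$ and $t_{n_v}\geq n_v(c)$.

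\emph{Counting facets.} The constraint system has exactly $2s$ inequalities (two per maxout) and is homogeneous in the parameters $c$; the input substitutions and the equalities for $t_{n_v}$ contribute no facets to any associated extended formulation. Applying the standard Yannakakis-style reduction from a parametric LP with positively homogeneous objective to an extended formulation of its value polytope (via LP duality), the number of primal inequalities upper-bounds the extension complexity of the Newton polytope of the optimal value. Hence $\xc(R_\text{out})\leq 2s$ and $\xc(Q_\text{out})\leq 2s$, and since $f_{R_\text{out}}-f_{Q_\text{out}}=f_P$, that is $P+Q_\text{out}=R_\text{out}$, we conclude
\[
\vxc(P)\leq \xc(R_\text{out})+\xc(Q_\text{out})\leq 4s=4\nnc(P).
\]

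The main obstacle is the inductive correctness of the LP in the second step, which is what allows the same constraint system to be reused for both $R$ and $Q$: one must argue that the equalities $t_{n_v}=A_n+B_n$ propagate the right values even though the $t_{p_u}$ are only lower-bounded. Fortunately the nonnegativity of all coefficients in $A_p,A_n,B_p,B_n$ makes the bound $t_{p_u}\geq p_u(c)$ monotone-compatible with the equality, and the minimization of either objective pushes every $t_{p_u}$ down to exactly $p_u(c)$, so that the equality indeed forces $t_{n_u}=n_u(c)$ throughout.
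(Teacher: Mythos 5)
Your proof is correct and follows essentially the same route as the paper: your identity $\max\{a-b,c-d\}=\max\{a+d,b+c\}-(b+d)$ is exactly the $k=2$ case of the weight-splitting identity \eqref{eq:newweights} behind \Cref{prop:split}, your one-variable-per-neuron system with two inequalities per maxout unit is the epigraph formulation of \Cref{prop:xcepi}, and your LP-duality step plays the role of \Cref{prop:xcdual}; the only organizational difference is that you carry both convex parts through a single doubled constraint system instead of first producing two separate monotone networks. The worry in your final paragraph is a non-issue: you do not need the minimization to force the intermediate $t_{p_u}$ to their exact values, since feasibility of the exact assignment together with your monotone induction $t_{p_v}\geq p_v(c)$ and $t_{n_v}\geq n_v(c)$ already pins down both optimal values.
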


The proof of this theorem requires three steps, two of which are already available through \Cref{prop:xcepi,prop:xcdual}. In addition, we need another proposition showing that every maxout network can be written as the difference of two monotone maxout networks each of which has the same size as the original network.
This is related to writing functions represented by neural networks as tropical rational functions \citep{zhang2018tropical,brandenburg2024real}.

\begin{proposition}\label{prop:split}
	If $f\colon\R^d\to\R$ is represented by a rank-$k$ maxout network of size~$s$, then it can be written as a difference $f=g-h$ of two functions $g$ and $h$ that are both representable with a \emph{monotone} rank-$k$ maxout network of size $s$.
\end{proposition}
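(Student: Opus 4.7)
The plan is an induction on the topological order of neurons in $N$, maintaining for each neuron $v$ a decomposition $z_v = g_v - h_v$ in which both $g_v$ and $h_v$ are produced by a single monotone rank-$k$ maxout unit. For an input neuron with $z_v = x_i$, I would set $g_v = x_i$ and $h_v = 0$; no new maxout unit is needed. For a maxout unit $v$ with $z_v = \max_{j=1}^k \sum_u w^j_{uv} z_u$, I would substitute $z_u = g_u - h_u$ and split each weight as $w^j_{uv} = (w^j_{uv})^+ - (w^j_{uv})^-$ into its nonnegative parts. Collecting terms rewrites $z_v$ as $\max_j(A_j - B_j)$, with
\[
A_j = \sum_u \bigl((w^j_{uv})^+ g_u + (w^j_{uv})^- h_u\bigr), \quad
B_j = \sum_u \bigl((w^j_{uv})^+ h_u + (w^j_{uv})^- g_u\bigr),
\]
each a nonnegative linear combination of the $g_u, h_u$'s.

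The crux is the algebraic identity
\[
\max_{j=1}^k (A_j - B_j) = \max_{j=1}^k \Bigl(A_j + \sum_{i \neq j} B_i\Bigr) - \sum_{i=1}^k B_i,
\]
which motivates defining $g_v := \max_j(A_j + \sum_{i \neq j} B_i)$, a single monotone rank-$k$ maxout unit, and $h_v := \sum_i B_i$, a single linear (hence monotone) unit expressible as a rank-$k$ maxout with identical slots. Both new units have only nonnegative weights, so monotonicity is preserved along the induction; every original maxout unit of $N$ contributes exactly one new unit to each of the two resulting networks, and the decomposition $z_v = g_v - h_v$ propagates to give $f = g - h$ at the output.

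The main obstacle I anticipate is that the new $g_v$-unit depends on both $g_u$'s \emph{and} $h_u$'s of predecessors (and symmetrically for $h_v$), so the naive construction yields a single combined monotone DAG of size $2s$ with two distinguished output neurons rather than two disjoint networks of size $s$ each. To meet the size bound in the statement, I would count one new unit per original neuron on each side and regard the $s$ $g$-type units and the $s$ $h$-type units (together with the shared input $x$ and monotonely computed intermediate values reusable across both sides) as defining two monotone rank-$k$ maxout networks of size $s$ computing $g$ and $h$ respectively; making this bookkeeping precise is the chief technical step.
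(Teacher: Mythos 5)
Your core algebraic identity is exactly the one the paper uses, and splitting each weight into its positive and negative parts is the right first move. However, there is a genuine gap in the size accounting, and you have correctly located it yourself: because $g_v$ depends on the predecessors' $h_u$'s and vice versa, your construction produces one connected monotone DAG with $2s$ maxout units and two distinguished outputs. Under the paper's definition, a network representing $g$ must contain every maxout unit that the $g$-output depends on, so the network for $g$ alone has size up to $2s-1$; you cannot declare the $h$-type units to be ``shared intermediate values'' available to the $g$-network for free. Made precise, your argument proves the proposition with $2s$ in place of $s$ (which would still yield \Cref{thm:nncbound} with a worse constant), but it does not establish the stated bound.

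The paper closes this gap with a different mechanism that avoids doubling altogether. It first transitively closes the DAG by adding zero-weight arcs (costing no neurons), then processes the maxout units in topological order and at each unit $v$ applies your identity, but instead of materializing the subtracted linear term $\sum_i B_i$ as a new $h_v$-unit, it absorbs that term into the incoming weights of the downstream neurons $\widetilde v$ via the update $w^i_{u\widetilde v}\mapsto w^i_{u\widetilde v}-w^i_{v\widetilde v}\sum_{j=1}^k b^j_{uv}$; this is exactly where transitive closure is needed, so that $u$ is already an in-neighbor of $\widetilde v$. The result is a single \emph{equivalent} network of unchanged size $s$ in which an ever-growing prefix of neurons has only nonnegative weights, any newly created negative weights appearing strictly later in the topological order. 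Only at the output neuron is the identity used to split $f=g-h$; the two monotone networks are then obtained by copying the $s-1$ now-monotone hidden neurons and attaching one monotone output unit each, giving size exactly $s$ for both.
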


\begin{proof}
	The proof is similar in spirit to \Cref{prop:inputconvex}, with some crucial differences making sure that we also eliminate negative weights between hidden neurons. In the end, this leads to two instead of one monotone network.
	
	Let $\mathcal{N}_0$ be the maxout network of size $s$ representing $f$. Without loss of generality, we assume that the underlying graph of $\mathcal{N}_0$ is transitively closed. We can always achieve this by introducing arcs with weights equal to zero. This does not increase the number of neurons and therefore not the size of the neural network according to our definition of \emph{size}\footnote{As above, this might however increase the number of arcs and parameters of the network.}. Let $v_1,v_2,\dots,v_s$ be a topological order of the maxout units of $\mathcal{N}_0$.
	
	We construct a sequence of equivalent neural networks $\mathcal{N}_1,\dots,\mathcal{N}_{s-1}$, pushing the negative weights neuron by neuron towards the output.
	More precisely, in $\mathcal{N}_p$ none of the neurons $v_1$ to $v_p$ will have negative weights on their incoming arcs.
	
	To go from $\mathcal{N}_{p-1}$ to $\mathcal{N}_p$ for some $p \geq 1$, we leave the first $p-1$ neurons unchanged. Now consider the weights $w_{uv}^i$ of $v\coloneqq v_p$.
	We split them into a positive and negative part, that is, $w_{uv}^i=a_{uv}^i-b_{uv}^i$ with $a_{uv}^i=\max\{w_{uv}^i,0\}$, $b_{uv}^i=\max\{-w_{uv}^i,0\}$.
	Observe that
	{\small
		\begin{align}\label{eq:newweights}
			z_v&=\max\Big\{\sum_{u\in\dinv} \big(a_{uv}^i + \sum_{j\neq i} b_{uv}^j\big) z_u \Bigm\vert i=1,\dots,k\Big\} - \sum_{u\in\dinv}  \sum_{j=1}^k b_{uv}^j  z_u  \enspace .
		\end{align}
	}%
	This allows to construct $\mathcal{N}_p$ from $\mathcal{N}_{p-1}$ by the following operation. 
	Each weight $w_{uv}^i$ of $v=v_p$ is set to $a_{uv}^i + \sum_{j\neq i} b_{uv}^j$.
	To make up for this change, one has to correct the weights of neurons $v_q$ with $q>p$ to incorporate the term outside the maximum in \eqref{eq:newweights}.
	More precisely, if $\widetilde{v}\coloneqq v_q$ is an out-neighbor and $u\in\dinv$ is an in-neighbor of $v=v_p$, then we need to update the weight $w_{u\widetilde{v}}^i$ to $w_{u\widetilde{v}}^i-w_{v\widetilde{v}}^i\sum_{j=1}^kb_{uv}^j$. It is easy to verify that this weight update exactly recovers the missing term from \eqref{eq:newweights} when computing $z_{\widetilde{v}}$.
	This might introduce new negative weights, but all of them are associated with neurons in the topological order after~$p$.
	So, after doing this operation, we obtain an equivalent neural network $\mathcal{N}_p$ with only nonnegative weights associated with the neurons $v_1$ to $v_p$.
	
	We continue that way until we obtain $\mathcal{N}_{s-1}$, where all weights except those of the output neuron are nonnegative.
	For the output neuron, we perform the same splitting operation of the weights into positive and negative parts as for all previous neurons, obtaining  \eqref{eq:newweights} for  $v\coloneqq v_s$. This allows us to define $g$ and $h$ via the right-hand side of \eqref{eq:newweights}: $g$ is the maximum expression and $h$ is the part behind the minus sign. By definition, this implies $f=z_{v_s} = g-h$.
	Now observe that by our construction, $g$ is already a maxout expression with only nonnegative weights. Furthermore, observe that $h$ can be artificially converted into such a maxout expression by taking the maximum over the same linear expression in the $z_u$-terms $k$ times.
	
	Hence we can obtain two monotone neural networks computing $g$ and $h$ by simply copying all neurons except the output neuron from $\mathcal{N}_{s-1}$ and using the maxout expressions described above as weights of the respective output neurons in the two neural networks.
\end{proof}

\begin{proofwithcaption}{Proof of \Cref{thm:nncbound}.}
	The statement simply follows from combining \Cref{prop:split,prop:xcepi,prop:xcdual}. The factor $4$ arises from one factor $2$ through \Cref{prop:split} and another factor $2$ from \Cref{prop:xcepi} as $\nnc(P)$ is defined with $k=2$.
\end{proofwithcaption}

\subsection{Optimizing over Virtual Extended Formulations}\label{sec:optimize}

One of the main reasons to study extension complexity is that it quantifies how well a given problem can be formulated as a linear program. 
Once we have a small-size extended formulation for a polytope $P$, we can efficiently optimize over $P$ by solving a single linear program.
In this section, we argue that virtual extension complexity allows a natural progression of this idea: it describes the power of differences of two linear programs.
Once we have a small-size virtual extended formulation for a polytope $P$, we can optimize efficiently over $P$ by solving two linear programs.

To this end, assume that we have polytopes $P+Q=R$ with small extended formulations for $Q$ and $R$. Switching to support functions implies that for all objective directions $c\in\R^d$ we obtain $\max_{x\in P} c^\top x +\max_{x\in Q} c^\top x =\max_{x\in R} c^\top x$; see also \Cref{fig:correspondence-polytope-support-function,fig:sum-support-functions}.
Thus, to obtain the optimal objective \emph{value}, it is sufficient to have small extended formulations for $Q$ and $R$, as we can really just optimize over $Q$ and $R$ and subtract the results. We now argue that this is basically also true for the \emph{solution} itself.

\begin{proposition}\label{prop:unique}
	Let $P+Q=R$ be polytopes and let $c\in\R^d$ be an objective direction such that the linear program $\max_{x\in R} c^\top x$ has a unique optimal solution~$x^R$. Then also the linear programs $\max_{x\in P} c^\top x$ and $\max_{x\in Q} c^\top x$ have unique solutions~$x^P$ and~$x^Q$, respectively, and it holds that $x^P=x^R-x^Q$.
\end{proposition}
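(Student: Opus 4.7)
The plan is to rely on the standard fact that, for polytopes, optimizing a linear functional commutes with Minkowski sum at the level of optimal faces. Concretely, for any direction $c$ and any polytope $S$, let $F_c(S) := \arg\max_{x\in S} c^\top x$. The key lemma I would state (and briefly justify) is
\[
F_c(P+Q) = F_c(P) + F_c(Q).
\]
The ``$\supseteq$'' direction is immediate: for $p\in F_c(P)$ and $q\in F_c(Q)$, we have $c^\top(p+q) = f_P(c)+f_Q(c) = f_{P+Q}(c)$, using the compatibility $f_{P+Q}=f_P+f_Q$ from the preliminaries. For ``$\subseteq$'', any $r\in F_c(P+Q)$ can be written as $r=p+q$ with $p\in P$, $q\in Q$; since $c^\top p\le f_P(c)$ and $c^\top q\le f_Q(c)$ while their sum equals $f_P(c)+f_Q(c)$, both inequalities are tight, so $p\in F_c(P)$ and $q\in F_c(Q)$.

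Given this lemma applied to $R=P+Q$, the hypothesis says $F_c(R)=\{x^R\}$ is a single point, hence $F_c(P)+F_c(Q)=\{x^R\}$. I would then invoke the elementary observation that a Minkowski sum of two nonempty sets is a singleton only if each summand is a singleton: if, for contradiction, $F_c(P)$ contained two distinct points $p_1\ne p_2$, then picking any $q\in F_c(Q)$ (nonempty because $Q$ is a nonempty polytope and $c$ attains its maximum) would produce two distinct points $p_1+q\ne p_2+q$ in $F_c(R)$. Thus $F_c(P)=\{x^P\}$ and $F_c(Q)=\{x^Q\}$, giving uniqueness of the optimizers, and the equation $x^P+x^Q=x^R$, i.e.\ $x^P=x^R-x^Q$, is forced.

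There is essentially no obstacle: the proof is a direct application of the face decomposition of a Minkowski sum together with the cancellation property for singletons. The only points to be careful about are (a) making sure that $F_c(P)$ and $F_c(Q)$ are nonempty, which follows since $P$ and $Q$ are polytopes and $c$ attains a maximum on each, and (b) flagging that the identity $f_{P+Q}=f_P+f_Q$, already recorded in \Cref{sec:prelim}, is exactly what drives the face decomposition. The argument therefore fits within a short proof of perhaps a half-page.
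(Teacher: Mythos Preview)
Your proposal is correct and follows essentially the same approach as the paper: both rely on the face decomposition $F_c(P+Q)=F_c(P)+F_c(Q)$ (which the paper states as \Cref{lem:MinkFaces}, citing \cite{grunbaum2003convex}) and then use that a singleton Minkowski sum forces singleton summands. You additionally supply a self-contained proof of the lemma and spell out the singleton-summand argument, whereas the paper cites the lemma and leaves the latter implicit.
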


The proof is based on the following fact about faces of Minkowski sums, which can be found, e.g., in \citet{grunbaum2003convex}.

\begin{lemma}\label{lem:MinkFaces}
	Let $P+Q=R$ be polytopes and $c$ be an objective direction. Then, the optimal face in $c$-direction of $R$ is the Minkowski sum of the optimal faces in $c$-direction of $P$ and $Q$, respectively.
\end{lemma}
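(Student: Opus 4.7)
The plan is to translate everything into the language of support functions, which is already set up in \Cref{sec:prelim}. Write $\alpha_P = f_P(c)$, $\alpha_Q = f_Q(c)$, $\alpha_R = f_R(c)$ for the optimal values, and $F_P = \arg\max_{x \in P} c^\top x$, $F_Q$, $F_R$ for the optimal faces. The identity $P+Q=R$ combined with property (i) of support functions from \Cref{sec:prelim} yields the key scalar equation $\alpha_P + \alpha_Q = \alpha_R$. From there I would prove the two set-theoretic inclusions separately.

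First I would show $F_P + F_Q \subseteq F_R$. Given $p \in F_P$ and $q \in F_Q$, the point $p+q$ lies in $P+Q = R$, and $c^\top(p+q) = \alpha_P + \alpha_Q = \alpha_R$, so $p+q \in F_R$. This direction is entirely routine.

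For the reverse inclusion $F_R \subseteq F_P + F_Q$, I would start with an arbitrary $r \in F_R$ and use the definition $R = P+Q$ to write $r = p + q$ with $p \in P$, $q \in Q$. Then
\[
\alpha_R = c^\top r = c^\top p + c^\top q \leq \alpha_P + \alpha_Q = \alpha_R,
\]
so both intermediate inequalities $c^\top p \leq \alpha_P$ and $c^\top q \leq \alpha_Q$ must hold with equality. This forces $p \in F_P$ and $q \in F_Q$, hence $r \in F_P + F_Q$.

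There is no real obstacle here: the entire argument hinges on the additivity of support functions under Minkowski sum, which is already recorded, and on a tight chain of inequalities. The only thing to be slightly careful about is invoking the decomposition $r = p + q$ with $p \in P$ and $q \in Q$, which is precisely the definition of membership in the Minkowski sum and so requires no extra justification.
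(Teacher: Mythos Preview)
Your proof is correct. The paper does not actually supply a proof of this lemma; it states it as a known fact about faces of Minkowski sums and cites Gr\"unbaum's textbook, so there is nothing to compare against. The double-inclusion argument you give, driven by the additivity $f_{P+Q}=f_P+f_Q$ and a tight inequality chain, is the standard way to establish this fact and would be a fine addition if one wanted the paper to be self-contained.
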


\begin{proofwithcaption}{Proof of \Cref{prop:unique}.}
	As for $R$ the optimal face in $c$-direction is the singleton $\{x^R\}$, by \Cref{lem:MinkFaces} the corresponding faces of $P$ and $Q$ must be singletons, too, namely $\{x^P\}$ and $\{x^Q\}$ with $x^P+x^Q=x^R$.
\end{proofwithcaption}

\Cref{prop:unique} implies that, for \emph{generic} objective directions $c$, we can efficiently find the optimal \emph{solution} within~$P$ by linear programming using small extended formulations for $Q$ and $R$.
However, the situation becomes more tricky if multiple optimal solutions exist within $R$, and then potentially also within $Q$.
Indeed, in this case we would not know which of the optimal solutions we should subtract in order to obtain a solution that is feasible for $P$.
Therefore, we need to be more careful to handle non-generic objective functions $c$. One solution would of course be to randomly perturb the objective function $c$ by a tiny amount such that the optimal solution in $R$ becomes unique.
In practice, however, it is not clear how to ensure that the perturbation is actually small enough and generic enough at the same time.

The key idea is that we need to make sure that ties between equally good solutions are broken consistently across $Q$ and $R$. 
Therefore, we suggest a deterministic procedure based on a \emph{lexicographic objective function}, which is a standard notion in multi-criteria optimization.
By this, we mean a sequence of objective functions where later entries are only considered to break ties.

\begin{proposition} \label{prop:lexicographic-face}
	Let $P+Q=R$ be polytopes and $(c_1^\top x, \dots, c_d^\top x)$ be a lexicographic objective function.
	If $C\coloneqq\{c_1, \dots, c_d\}$ forms a basis of $\R^d$, the sets of maximizers for $P$, $Q$ and $R$ are singletons $\{x^P\}, \{x^Q\}$ and $\{x^R\}$, respectively, and $x^P + x^Q = x^R$. 
\end{proposition}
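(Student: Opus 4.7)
The plan is to iterate \Cref{lem:MinkFaces} once for each entry of the lexicographic objective. Define $F_0^P \coloneqq P$ and inductively $F_i^P \coloneqq \arg\max_{x \in F_{i-1}^P} c_i^\top x$ for $i = 1, \dots, d$; define $F_i^Q$ and $F_i^R$ analogously. By construction, $F_d^P$ is precisely the set of lex-optimal points of $P$ with respect to $(c_1^\top x, \dots, c_d^\top x)$, and similarly for $Q$ and $R$.

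First, I would prove by induction on $i$ that $F_i^R = F_i^P + F_i^Q$ for every $i = 0, 1, \dots, d$. The base case $i = 0$ is exactly the hypothesis $P + Q = R$. For the induction step, the hypothesis gives $F_{i-1}^R = F_{i-1}^P + F_{i-1}^Q$, and \Cref{lem:MinkFaces} applied to this Minkowski decomposition in direction $c_i$ yields $F_i^R = F_i^P + F_i^Q$.

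Second, I would argue that $F_d^P$ is a singleton (the same argument then applies to $F_d^Q$ and $F_d^R$). By construction, each $c_i$ is constant on $F_d^P$. Since $\{c_1, \dots, c_d\}$ is a basis of $\R^d$, every linear functional on $\R^d$ is a linear combination of the $c_i$ and is therefore constant on $F_d^P$. A nonempty convex subset of $\R^d$ on which every linear functional is constant must be a single point, so $F_d^P = \{x^P\}$ for some $x^P$; likewise $F_d^Q = \{x^Q\}$ and $F_d^R = \{x^R\}$. Combining with the identity from the first step yields $\{x^R\} = \{x^P\} + \{x^Q\}$, hence $x^P + x^Q = x^R$.

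I do not expect a serious obstacle here; the main conceptual point is to recognize that each stage of the lexicographic procedure is again a linear optimization over a polytope (since each $F_i^P$ is a face of $P$, hence itself a polytope), so \Cref{lem:MinkFaces} can be applied iteratively without further work. The basis assumption then does all the remaining work by forcing the final face to have no free direction.
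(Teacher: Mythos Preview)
Your proof is correct and essentially identical to the paper's: both iterate \Cref{lem:MinkFaces} along the lexicographic sequence to maintain $F_i^P+F_i^Q=F_i^R$, then use the basis assumption to conclude that the final faces are singletons. The only cosmetic difference is that you start the induction at $i=0$ with $F_0^P=P$, whereas the paper starts at $i=1$.
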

\begin{proof}
	For $i=1,\dots,d$, let $F^P_i$ be the set of optimal solutions over $P$ with respect to the lexicographic objective function $(c_1^\top x, \dots, c_i^\top x)$.
	Let $F^Q_i$ and $F^R_i$ be defined analogously.
	We show by induction on $i$ that $F^P_i+F^Q_i=F^R_i$.
	For $i=1$, this follows from \Cref{lem:MinkFaces}.
	For $i>1$, observe that $F^P_i$ is the face in $c_i$-direction of $F^P_{i-1}$, and analogously for $Q$ and $R$.
	Hence, we can apply \Cref{lem:MinkFaces} again to complete the induction step.
	As a consequence, we obtain that $F^P_d+F^Q_d=F^R_d$.
	Moreover, since $C$ is a basis of $\R^d$, it follows that any two optimal solutions with respect to the lexicographic objective function must be equal, implying that $F^P_d$, $F^Q_d$, and $F^R_d$ are singletons.
\end{proof}

This lays the basis for \Cref{alg:optvirtextform}.

\begin{algorithm2e}[htb]
	\caption{Solve $\max_{x\in P} c^\top x$ where $P$ is given through extended formulations of two polytopes $Q$ and $R$ with $P+Q=R$.}
	\label{alg:optvirtextform}
	let $c_1\coloneqq c\in\R^d$\;
	compute an arbitrary basis $C\coloneqq\{c_1,c_2,\dots,c_d\}$ of $\R^d$\;
	let $x^Q$ and $x^R$ be optimal solutions within $Q$ and $R$, respectively, with respect to the lexicographic objective function $(c_1^\top x,c_2^\top x,\dots,c_d^\top x)$\label{line:LP}\;
	\Return{$x^R-x^Q$}\;
\end{algorithm2e}

\begin{theorem} \label{thm:algo-two-extended-formulations}
	\Cref{alg:optvirtextform} correctly returns an optimal solution of $\max_{x\in P} c^\top x$ and can be implemented with a running time that is polynomial in the encoding sizes of $c$ and the extended formulations for $Q$ and $R$.
\end{theorem}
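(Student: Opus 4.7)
The plan is to split the argument into a correctness part and a running time part, both leveraging Proposition~\ref{prop:lexicographic-face} together with standard LP machinery. For correctness, I would start by observing that the algorithm explicitly completes $c_1 := c$ to a basis $C = \{c_1, c_2, \dots, c_d\}$ of $\R^d$. Proposition~\ref{prop:lexicographic-face} then applies directly: the lexicographic objective $(c_1^\top x, \dots, c_d^\top x)$ has unique maximizers $x^P, x^Q, x^R$ over $P, Q, R$, and these satisfy $x^P = x^R - x^Q$. Since $c_1 = c$ is the primary lexicographic coefficient, $x^P$ is in particular a maximizer of $c^\top x$ over $P$. Hence the returned vector $x^R - x^Q$ is a valid optimum for the original linear program $\max_{x \in P} c^\top x$.

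For the running time, the basis-completion step in line~2 is a standard application of Gaussian elimination on $c$ together with the unit vectors $e_1, \dots, e_d$, and runs in time polynomial in the bit size of $c$. The crux is implementing line~\ref{line:LP}, i.e.\ computing a lexicographic optimum over an explicitly given extended formulation, say $\{y : Ay \leq b\}$ together with an affine projection $\pi$. I would do this by the textbook sequential procedure: solve $\max c_1^\top \pi(y)$ subject to $Ay \leq b$, record the optimal value $v_1$, append the equality $c_1^\top \pi(y) = v_1$, then solve $\max c_2^\top \pi(y)$, record $v_2$, append the equality $c_2^\top \pi(y) = v_2$, and continue through all $d$ objectives. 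Each of the $d$ LPs has size polynomial in the input (extended formulation plus $c$), so e.g.\ the ellipsoid method yields an overall polynomial running time. The point $x^Q = \pi(y^Q)$ returned by the last LP is, by Proposition~\ref{prop:lexicographic-face}, the unique lexicographic optimum in $Q$; likewise for $R$.

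The main obstacle is really conceptual rather than technical: one must rule out the ambiguity that would arise if, upon solving $\max c^\top x$ over $R$ and $Q$ independently, the chosen $x^R$ and $x^Q$ fail to satisfy $x^R - x^Q \in P$. This is precisely why the lexicographic refinement with a full basis is needed, and why correctness is not immediate from Lemma~\ref{lem:MinkFaces} alone. Once that point is settled by Proposition~\ref{prop:lexicographic-face}, the polynomial-time implementation is routine, modulo the standard but non-trivial fact that lexicographic LPs admit a polynomial-time algorithm through the iterated-equality reduction sketched above.
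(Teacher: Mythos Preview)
Your proposal is correct and follows essentially the same approach as the paper's proof: correctness via Proposition~\ref{prop:lexicographic-face} (the lexicographic refinement to a full basis yields unique optima satisfying $x^P = x^R - x^Q$, and optimality for the first component $c_1 = c$ gives optimality for the original LP), and polynomial running time via the iterated-equality reduction of the lexicographic LP to $d$ ordinary LPs. Your version is in fact slightly more explicit than the paper's in handling the extended formulation through the projection $\pi$ and in noting that basis completion is a Gaussian-elimination step.
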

\begin{proof}
	Let $x^P, x^Q, x^R$ be the unique optimizers as guaranteed by \Cref{prop:lexicographic-face}. 
	As they fulfill $x^P + x^Q = x^R$, the algorithm correctly returns $x^P = x^R - x^Q$.
	Furthermore, since $x^P$ is optimal with respect to the lexicographic objective, it is in particular optimal with respect to the first component of the objective, which is $c^\top x$.
	Therefore, the algorithm returns an optimal solution for $\max_{x\in P} c^\top x$.
	
	For the running time, the only interesting step is line \ref{line:LP}.
	One way to see that this runs in polynomial time is to first solve with respect to objective $c_1$, obtain the optimal objective value $\lambda_1$, then add the constraint $c_1^\top x = \lambda_1$, and solve with respect to objective $c_2$, and so on.
	After solving $d$ linear programs for each of the two polytopes $Q$ and $R$, we obtain the desired optimal solutions with respect to the lexicographic objective.
	Depending on the precise linear programming algorithm used, one can handle the lexicographic objective function directly in a single linear programming computation for each of the two polytopes.
\end{proof}

\subsection{Extension Complexity and Minkowski Sums}\label{sec:genperm}

For a better understanding of virtual extension complexity, the behavior of extension complexity under Minkowski sum is crucial. 
It is well-known that $\xc(R) \leq \xc(P) + \xc(Q)$ for $P + Q = R$.
This follows from plugging the extended formulations for $P$ and $Q$ into the definition of the Minkowski sum.
However, for bounding virtual extension complexity, one would need to bound $\xc(P)$ in terms of $\xc(Q)$ and $\xc(R)$.
In the following, we present a class of examples demonstrating that one cannot simply focus on $\xc(R)$ for this, but also needs to take~$\xc(Q)$ into account.

Recall that the \emph{regular permutahedron} for $n \in \N$ is the polytope
\[
\Pi_n = \conv\{ (\sigma(1),\sigma(2),\ldots,\sigma(n)) \mid \sigma \text{ is a permutation of }\{1,\dots,n\}\}
\]
arising as the convex hull of all permutations of the vector $(1,2,\ldots,n)$.
The regular permutahedron has the property that all edge directions are of the form $e_i - e_j$ for two unit vectors $e_i, e_j$.
More generally, polytopes fulfilling this form the class of \emph{generalized permutahedra}.
A different characterization is in terms of Minkowski sums: a polytope $P$ is a generalized permutahedron if and only if there is a polytope $Q$ with $P + Q = \lambda\cdot \Pi_n$ for some $\lambda\geq0$, see \citet[Proposition~3.2]{PostnikovReinerWilliams:2008}.
It turns out that every \emph{matroid base polytope} is such a generalized permutahedron, and we can even choose $\lambda=1$ in this case \citep{BorovikGelfandWhite:2003,Fujishige:2005}. 

Hence, we can apply \Cref{thm:xc-matroid} to deduce a statement about summands of $\Pi_n$.

\begin{theorem}
	For $n \in \N$, there is a polytope $\Pi_n$ with extension complexity $\mathcal{O}(\text{poly}(n))$, such that a Minkowski summand of $\Pi_n$ has extension complexity~$2^{\Omega(n)}$.
\end{theorem}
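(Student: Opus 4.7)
The plan is to take $\Pi_n$ itself as the polytope with small extension complexity and to exhibit a suitably chosen matroid base polytope as its troublesome Minkowski summand. This combines three ingredients already present in the excerpt: Goemans' construction showing $\xc(\Pi_n) = \mathcal{O}(n \log n)$~\cite{Goemans:2015}, the fact (Borovik--Gelfand--White, Fujishige) that every matroid base polytope is a generalized permutahedron with $\lambda = 1$, and Rothvo\ss's exponential lower bound on the extension complexity of matroid base polytopes (\Cref{thm:xc-matroid}).

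Concretely, I would proceed as follows. First, cite Goemans' sorting-network based extended formulation of $\Pi_n$ to obtain $\xc(\Pi_n) = \mathcal{O}(n \log n) = \mathcal{O}(\mathrm{poly}(n))$; this takes care of the upper bound half of the statement. Second, apply \Cref{thm:xc-matroid} to pick a family of matroids $M_n$ on $n$ elements whose base polytopes $B(M_n)$ satisfy $\xc(B(M_n)) = 2^{\Omega(n)}$. Third, invoke the characterization of generalized permutahedra recalled just before the theorem: since $B(M_n)$ is a generalized permutahedron with $\lambda = 1$, there exists a polytope $Q_n$ with $B(M_n) + Q_n = \Pi_n$. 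This exhibits $B(M_n)$ as a Minkowski summand of $\Pi_n$, so taking this summand witnesses the theorem.

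There is no substantive obstacle here; the proof is essentially the composition of these three cited facts. The only point deserving attention is the scaling parameter: one must use that matroid base polytopes fit into the $\lambda = 1$ case of the generalized-permutahedron characterization, rather than into some dilation $\lambda \Pi_n$ (which would still suffice up to rescaling, since extension complexity is invariant under positive dilations, but is unnecessary here). This is exactly the content of the sentence preceding the theorem, so the argument reduces to one paragraph of citation stitching.
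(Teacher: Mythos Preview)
Your proposal is correct and follows essentially the same route as the paper's proof: cite Goemans for $\xc(\Pi_n)=\Theta(n\log n)$, use that every matroid base polytope is a Minkowski summand of $\Pi_n$ (the $\lambda=1$ case of the generalized permutahedron characterization), and invoke \Cref{thm:xc-matroid} to obtain a summand with extension complexity $2^{\Omega(n)}$. Your parenthetical remark that any $\lambda>0$ would also work via dilation-invariance of $\xc$ is a correct and slightly more careful observation than the paper makes explicit.
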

\begin{proof}
	By \citet{Goemans:2015}, the extension complexity of the regular permutahedron $\Pi_n$ is of the order $\Theta(n\log(n))$.
	As discussed, each matroid base polytope $P$ is a Minkowski summand of $\Pi_n$. 
	Now, \Cref{thm:xc-matroid} guarantees that there exist such polytopes with extension complexity~$2^{\Omega(n)}$.
\end{proof}

Considering a matroid polytope $P$ with high extension complexity from the latter proof such that $P + Q = \Pi_n$ for some other generalized permutahedron~$Q$, we cannot exclude that $Q$ also has high extension complexity.
Therefore, without a better understanding of $Q$, this does not yield any useful bound on $\vxc(P)$. 

On a final note, we would like to emphasize that in the definition of the virtual extension complexity, it is important that $Q$ and $R$ are polytopes and not potentially unbounded polyhedra. A simple example is given by the choice of $Q=R=\R^d$, which would immediately imply $\vxc(P)=0$ for any polyhedron $P$, if this were allowed. The important algebraic property that we need to make sure that we can recover $P$ from $Q$ and $R$ is the cancellation property: For polytopes $A,B,C$, it is true that $A+C=B+C$ implies $A=B$. The same is not true for potentially unbounded polyhedra.

\section{Conclusions}

In this paper we focused on proving strong lower bounds for monotone and input-convex neural networks based on existing breakthrough results about extension complexity. Furthermore, we paved the way towards transferring these bounds to general neural networks by introducing the notion of virtual extension complexity and relating it to neural networks.

An obvious question for future research is a thorough analysis of how much more powerful general neural networks are compared to monotone or input-convex neural networks. Can we find a function for which there is an exponential gap in the required size?

Closely related, and from the perspective of polyhedral theory and combinatorial optimization, the most intriguing open question is the following.

\begin{question}\label{qu:open}
	Can $\vxc(P)$ be (much) smaller than $\xc(P)$?
\end{question}

We do not see any clear evidence in either direction for the resolution of \Cref{qu:open}. If the answer is no, then this would imply strong lower bounds on neural networks. For that, it would even be sufficient to prove that $\xc(P)$ cannot be exponentially larger than $\vxc(P)$. On the other hand, if the answer to \Cref{qu:open} is yes, it would imply that solving two linear programs and taking the difference is (much) more powerful than just solving one linear program.

In fact, we are not even aware of an example for which we can prove that $\vxc(P)<\xc(P)$. A first step into this direction is the result by \cite{hertrich2025arithmetic}, showing that for $P$ being the base polytope of a regular matroid on $n$ elements, $\vxc(P)\in O(n^3)$, while the best known upper bound on $\xc(P)$ is $O(n^6)$~\citep{aprile2022regular}. However, it is conceivable that the $\xc$ upper bound is suboptimal and there might be no actual separation.

As for the usual extension complexity, we do not expect $\vxc(P)$ to be polynomial for any polytope~$P$ associated with an NP-hard optimization problem. An obvious, but seemingly challenging approach to lower-bound virtual extension complexity is to take any previously successful method for lower-bounding extension complexity and try to adapt it such that it also works in the virtual case. Most such approaches in the literature rely on the theorem by \citet{yannakakis1991expressing} stating that $\xc(P)$ equals the nonnegative rank of the slack matrix. Therefore, a first step would be to establish a similar, even if weaker, characterization of $\vxc(P)$ in terms of the slack matrix. Beyond this, we think the most promising route might be to focus on lower bounds based on counting arguments, similar to~\cite{rothvoss2013some}, with the aim to obtain an analogue of \Cref{thm:xc-matroid} for $\vxc$.

To summarize, we hope that this paper inspires researchers to find out: Are two LPs better than one?

\subsection*{Acknowledgments}We thank Alex Black, Daniel Dadush, Jes\'us A. De Loera, Samuel Fiorini, Neil Olver, L\'aszl\'o V\'egh, Matthias Walter, and Stefan Weltge for insightful discussions around (virtual) extension complexity that have shaped this paper.
	
Part of this work was completed while Christoph Hertrich was affiliated with Université \mbox{Libre} de Bruxelles, Belgium, and received support from the European Union's Horizon Europe research and innovation program under the Marie Skłodowska-Curie grant agreement No 101153187---NeurExCo.

\bibliographystyle{abbrvnat}
\bibliography{ref}

\appendix

\crefalias{section}{appendix} 

\end{document}